\documentclass[12pt]{article}
\usepackage{e-jc}



\usepackage{amsthm,amsmath,amssymb}

\usepackage{graphicx}

\usepackage[colorlinks=true,citecolor=black,linkcolor=black,urlcolor=blue]{hyperref}


\newcommand{\arxiv}[1]{\href{http://arxiv.org/abs/#1}{\texttt{arXiv:#1}}}


\theoremstyle{plain}
\newtheorem{theorem}{Theorem}
\newtheorem{lemma}[theorem]{Lemma}
\newtheorem{corollary}[theorem]{Corollary}

\theoremstyle{definition}

\theoremstyle{remark}
\newtheorem{remark}[theorem]{Remark}



\title{\bf Convolution estimates and the number of disjoint partitions}


\author{Paata Ivanisvili\thanks{This paper is  based upon work supported by the National Science Foundation under Grant No. DMS-1440140 while the author was in residence at the Mathematical Sciences Research Institute in Berkeley, California, during the Spring 2017 semester.}\\
\small Department of Mathematics, Kent State University\\[-0.8ex]
\small 1300 Lefton Esplanade, Kent OH 44242\\[-0.8ex] 
\small\tt pivanisv@kent.edu
}


\date{\dateline{May 27, 2017}{May 27, 2017}\\
\small Mathematics Subject Classifications: 11B30}

\begin{document}

\maketitle


\begin{abstract}
Let $X$ be a finite collection of sets. We count the number of ways  a disjoint union of $n-1$ subsets in $X$ is a set in $X$, and estimate this number from above by    $|X|^{c(n)}$ where 
$$
c(n)=\left(1-\frac{(n-1)\ln (n-1)}{n\ln n} \right)^{-1}.
$$ 
This extends the recent result of Kane--Tao, corresponding to the case $n=3$ where $c(3)\approx 1.725$, to an arbitrary finite number of disjoint  $n-1$ partitions. 

\end{abstract}

\section{Introduction}

Let $\{0,1\}^{m}$ be the Hamming cube of  dimension $m \geq 1$.  Set $1^{m}:=(1,1,\ldots, 1)$ to be the corner of $\{0,1\}^{m}$. Take a finite number of functions $f_{1}, \ldots, f_{n} : \{0,1\}^{m} \to \mathbb{R}$, and define the convolution at the corner $1^{m}$ as  
\begin{align*}
f_{1}*f_{2}*\ldots*f_{n}(1^{m}):=\sum_{x_{j} \in \{0,1\}^{m}\, :\, x_{1}+\cdots+x_{n}=1^{m}} f_{1}(x_{1})\cdots f_{n}(x_{n}). 
\end{align*} 
Given $f :\{0,1\}^{m} \to \mathbb{R}$ define its $L^{p}$ norm ($p\geq 1$)  in a standard way 
\begin{align*}
\|f\|_{p} := \left( \sum_{x \in \{0,1\}^{m}} |f(x)|^{p}\right)^{1/p}.
\end{align*}
For  $n\in \mathbb{N}$ we set 
\begin{align*}
p_{n} := \frac{\ln \frac{n^{n}}{(n-1)^{n-1}}}{\ln n}.
\end{align*}
Our main result is the following theorem 
\begin{theorem}\label{taot}
For any $n,m \geq 1$, and any $f_{1}, \ldots, f_{n} :\{0,1\}^{m} \to \mathbb{R}$ we have 
\begin{align}\label{maininequality}
f_{1}*f_{2}*\ldots*f_{n}(1^{m})\leq \prod_{j=1}^{n} \|f_{j}\|_{p_{n}}.
\end{align}
Moreover,  for each fixed $n$ exponent $p_{n}$ is the best possible in the sense that it cannot be replaced by any larger number. 
\end{theorem}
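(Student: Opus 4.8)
The plan is to treat the two claims separately. For the inequality \eqref{maininequality} I would induct on the dimension $m$, peeling off a single coordinate, and reduce everything to a purely scalar inequality in $2n$ nonnegative reals; the sharpness claim I would then read off from the equality case of that same scalar inequality already at $m=1$.

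\textbf{Reduction to a scalar inequality.} Writing each $x_j=(x_j',\varepsilon_j)$ with $x_j'\in\{0,1\}^{m-1}$ and $\varepsilon_j\in\{0,1\}$, the constraint $x_1+\cdots+x_n=1^m$ splits into $x_1'+\cdots+x_n'=1^{m-1}$ together with $\varepsilon_1+\cdots+\varepsilon_n=1$, so that exactly one $\varepsilon_j$ equals $1$. Setting $f_j^{\varepsilon}(x'):=f_j(x',\varepsilon)$ gives
\begin{align*}
f_1*\cdots*f_n(1^m)=\sum_{j=1}^n\bigl(f_1^0*\cdots*f_{j-1}^0*f_j^1*f_{j+1}^0*\cdots*f_n^0\bigr)(1^{m-1}).
\end{align*}
By the induction hypothesis each summand is at most $\|f_j^1\|_{p_n}\prod_{k\neq j}\|f_k^0\|_{p_n}$, and since the $L^{p_n}$-norm tensorizes in the last coordinate as $\|f_j\|_{p_n}^{p_n}=\|f_j^0\|_{p_n}^{p_n}+\|f_j^1\|_{p_n}^{p_n}$, the whole theorem follows once I prove, with $a_j:=\|f_j^0\|_{p_n}\geq 0$ and $b_j:=\|f_j^1\|_{p_n}\geq 0$, the scalar inequality
\begin{align}\label{scalar}
\sum_{j=1}^n b_j\prod_{k\neq j}a_k\ \le\ \prod_{j=1}^n\bigl(a_j^{p_n}+b_j^{p_n}\bigr)^{1/p_n}.
\end{align}
The base case $m=1$ is exactly \eqref{scalar} with $a_j=|f_j(0)|$ and $b_j=|f_j(1)|$, so no separate argument is needed there.

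\textbf{Proving the scalar inequality.} After dividing by $\prod_j a_j$ (the case $a_j=0$ being a limit) and setting $t_j:=b_j/a_j\geq 0$, inequality \eqref{scalar} is equivalent, upon raising to the $p_n$-th power, to
\begin{align}\label{clean}
\Bigl(\sum_{j=1}^n t_j\Bigr)^{p_n}\ \le\ \prod_{j=1}^n\bigl(1+t_j^{p_n}\bigr),\qquad t_j\geq 0.
\end{align}
I would prove \eqref{clean} by minimizing $R(t):=\prod_j(1+t_j^{p})\big/\bigl(\sum_j t_j\bigr)^{p}$ over $[0,\infty)^n$ and showing $\min R=1$ precisely when $p=p_n$. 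A Lagrange computation gives, at any interior critical point, $t_j^{\,p-1}(S-t_j)=1$ for every $j$, where $S=\sum_k t_k$; since $t\mapsto t^{p-1}(S-t)$ first increases then decreases, each $t_j$ can take only one of at most two values. The fully symmetric solution $t_1=\cdots=t_n=(n-1)^{-1/p}$ yields $R=n^{\,n-p}(n-1)^{-(n-1)}$, which equals $1$ exactly when $(n-p)\ln n=(n-1)\ln(n-1)$, i.e.\ $p\ln n=n\ln n-(n-1)\ln(n-1)$, the defining identity of $p_n$.

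\textbf{Main obstacle.} The genuine difficulty is global rather than local: the profile $t\mapsto\ln(1+t^{p})$ is convex for $t^{p}<p-1$ and concave afterwards, so Jensen and Schur-convexity do not apply directly and one cannot immediately discard the \emph{asymmetric} two-valued critical points. I would control these by (i) treating the boundary faces $\{t_j=0\}$ through induction on $n$, which is favorable because $p_n$ is strictly decreasing in $n$: since the $\ell^{p}$-norm is decreasing in $p$, the $(n-1)$-variable inequality holds with room to spare at the smaller exponent $p_n<p_{n-1}$; and (ii) showing that among interior critical points $R$ is minimized by the symmetric configuration, via a two-coordinate exchange argument exploiting that coordinates in the convex regime want to coalesce while those in the concave regime are driven to the boundary, leaving the symmetric point as the only interior candidate for the minimum. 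Establishing (ii) uniformly in $n$ is the technical heart of the proof.

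\textbf{Sharpness.} For optimality of $p_n$ it suffices to work in $m=1$. Take $f_j(0)=1$ and $f_j(1)=(n-1)^{-1/p_n}$ for all $j$; then \eqref{clean}, and hence \eqref{maininequality}, holds with equality. For any $p>p_n$ the right-hand side $\prod_{j}(1+t_j^{p})^{1/p}$, evaluated at these same fixed $t_j>0$, is strictly smaller than its value at $p=p_n$ (the quantity $(1+t^{p})^{1/p}$ is strictly decreasing in $p$ for fixed $t>0$), while the left-hand side $\sum_j t_j$ is unchanged, so the inequality fails. Thus $p_n$ cannot be replaced by any larger exponent.
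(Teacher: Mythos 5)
Your reduction to the scalar inequality, your handling of the boundary faces via induction on $n$ (using that $p_n$ decreases in $n$ and that $p\mapsto(\sum x_i^{1/p})^p$ is monotone), and your sharpness argument are all correct, and coincide with the paper's own steps; the two-valued structure of interior critical points that you derive from the Lagrange condition $t_j^{p-1}(S-t_j)=1$ also appears in the paper. But there is a genuine gap exactly where you yourself place ``the technical heart'': you never establish your point (ii), that asymmetric two-valued interior critical points can be discarded. This is not a routine omission. Such critical points do exist (in the paper they are parametrized by solutions of a transcendental equation), so the symmetric configuration is \emph{not} ``the only interior candidate for the minimum''; one must either show that these points are not minima or verify the inequality at them, and evaluating your ratio $R$ there requires solving the coupled equations $t_j^{p-1}(S-t_j)=1$, which is precisely the obstacle Kane--Tao already hit at $n=3$ (their ``bad'' critical points whose values require numerical computation), and it only gets worse as $n$ grows. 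A coalescence/exchange heuristic does not circumvent this, and nothing in your sketch indicates how it would be made rigorous uniformly in $n$.

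The paper's way around this obstacle --- its main new idea, which your proposal is missing --- is not to differentiate the original inequality at all, but first to linearize the logarithm by duality, $\ln x = \min_{b\in\mathbb{R}}(b + e^{-b}x - 1)$, and to minimize the resulting functional $B(x,b)$ in $x$ for fixed $b$. This leaves a function $f(y)$ whose critical point equations (\ref{crit}) imply the identity $\sum y_i^{-r} = n-1$; that identity eliminates the awkward terms $\sum y_i^{-r}$, so that at \emph{every} critical point, symmetric or not, the required bound collapses to (\ref{lll}), and after solving the two-value system explicitly, to the one-variable statement (\ref{last}), which is checked by elementary monotonicity in $k$ and in $z$ --- no numerics, and no need to locate the critical points themselves. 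To complete your proof along your own lines you would have to either import this dual substitution or supply a genuinely new argument for the asymmetric critical points; as written, the central inequality (\ref{mine}) remains unproved.
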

As an immediate application we obtain the following corollary (see Section~\ref{kak} below).
\begin{corollary}\label{taoc}
Let $X$ be a finite collection of sets. Then 
\begin{align}\label{bolof}
\left| \left\{ (A_{1}, \ldots, A_{n-1}, A) \in \underbrace{X\times \cdots \times X}_{n}\; :\ A = \bigsqcup_{j=1}^{n-1} A_{j}  \right\} \right|\leq |X|^{\frac{n}{p_{n}}},
\end{align}
where $\bigsqcup$ denotes  the disjoint union, and $|X|$ denotes cardinality of the set. 
\end{corollary}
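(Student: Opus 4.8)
The plan is to realize the counting quantity on the left-hand side of \eqref{bolof} directly as a convolution at the corner, and then invoke Theorem~\ref{taot}. First I would fix an ambient ground set $U := \bigcup_{S \in X} S$, set $m := |U|$, and identify $U$ with $\{1,\ldots,m\}$, so that every $S \in X$ is identified with its indicator vector $\mathbb{1}_S \in \{0,1\}^m$. The crucial structural observation is that for vectors $x_1, \ldots, x_n \in \{0,1\}^m$ the corner constraint $x_1 + \cdots + x_n = 1^m$ holds if and only if the associated subsets $B_1, \ldots, B_n \subseteq U$ form a disjoint partition $U = B_1 \sqcup \cdots \sqcup B_n$; indeed, in each coordinate the sum of $n$ values from $\{0,1\}$ equals $1$ precisely when exactly one summand is $1$.

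The second step is a complement trick that converts an $(n-1)$-fold disjoint union into an honest $n$-fold partition. Given a tuple counted on the left of \eqref{bolof}, so that $A = \bigsqcup_{j=1}^{n-1} A_j$ with all sets in $X$, I would adjoin $A_n := U \setminus A$. Then $A_1, \ldots, A_{n-1}, A_n$ is a disjoint partition of $U$, and conversely every such partition with $A_1, \ldots, A_{n-1} \in X$ and $U \setminus A_n \in X$ arises from a unique counted tuple via $A = U \setminus A_n$. Accordingly I set $f_j := \mathbb{1}_X$ for $1 \le j \le n-1$ (the indicator, on $\{0,1\}^m$, of those vectors whose subset lies in $X$) and $f_n := \mathbb{1}_{\widetilde{X}}$, where $\widetilde{X} := \{U \setminus S : S \in X\}$ is the collection of complements. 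With these choices each term of $f_1 * \cdots * f_n(1^m)$ is $0$ or $1$, and a term equals $1$ exactly when the associated partition has $A_1,\ldots,A_{n-1}\in X$ and $U\setminus A_n\in X$; by the bijection just described the left-hand side of \eqref{bolof} therefore equals $f_1 * \cdots * f_n(1^m)$.

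Finally I would apply Theorem~\ref{taot} to conclude $f_1 * \cdots * f_n(1^m) \le \prod_{j=1}^n \|f_j\|_{p_n}$. Since each $f_j$ is $0$–$1$ valued, $\|f_j\|_{p_n} = |X|^{1/p_n}$ for $1 \le j \le n-1$, while $\|f_n\|_{p_n} = |\widetilde{X}|^{1/p_n} = |X|^{1/p_n}$ because $S \mapsto U \setminus S$ is a bijection and hence $|\widetilde{X}| = |X|$. Multiplying the $n$ factors gives the bound $|X|^{n/p_n}$, which is exactly \eqref{bolof}. I do not anticipate a serious obstacle: the entire content is the reduction to a corner convolution, and the single point requiring care is the complement trick together with the equality $|\widetilde{X}| = |X|$, which is precisely what lets all $n$ factors contribute the common norm $|X|^{1/p_n}$. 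One should also record the identity $n/p_n = c(n)$ from the abstract, which follows upon writing $p_n = \bigl(n \ln n - (n-1)\ln(n-1)\bigr)/\ln n$.
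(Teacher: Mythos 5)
Your proposal is correct and takes essentially the same approach as the paper: after identifying sets with their indicator vectors, the paper likewise takes $f_1=\cdots=f_{n-1}$ to be the indicator that the set of coordinates equal to $1$ lies in $X$, and $f_n$ to be the indicator that the set of coordinates equal to $0$ lies in $X$ (your indicator of the complement family $\widetilde{X}$), then applies Theorem~\ref{taot}. You merely spell out the bijection and the norm computation $\|f_j\|_{p_n}=|X|^{1/p_n}$ that the paper compresses into the remark that inequality (\ref{maininequality}) becomes (\ref{bolof}).
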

The  corollary extends a recent result of Kane--Tao~\cite{KT}, corresponding to the case $n=3$ where 
$\frac{3}{p_{3}} \approx 1.725$, to an arbitrary finite number $n \geq 3$ disjoint partitions.  
\section{The proof of the theorem}
Following~\cite{KT} the proof goes by induction on the dimension of the cube $\{0,1\}^{m}$. 
The case $m=1$, which  is the most difficult, is the main  contribution of the current paper. 
\subsection{Basis: $m=1$}
 In this case, set $f_{j}(0)=u_{j}$ and $f_{j}(1)=v_{j}$ for $j=1,\ldots, n$.  Then the inequality (\ref{maininequality}) takes the form 
\begin{align}\label{i1}
\sum_{j=1}^{n} u_{j} \prod_{\substack{i=1 \\ i\neq j }}^{n}v_{i} \leq \prod_{j=1}^{n} \left(|u_{j}|^{p_{n}}+|v_{j}|^{p_{n}} \right)^{1/p_{n}}.
\end{align}

We do encourage the reader first to try to prove (\ref{i1}) in the case $n=3$, or visit~\cite{KT}, to see what is the obstacle. For example, when $n=3$  equality in (\ref{i1}) is attained at several points. Besides, direct differentiation of (\ref{i1}) reveals many ``bad'' critical points at which finding the values of (\ref{i1}) would require numerical computations~\cite{KT}. The number of critical points together with  equality cases increases as $n$ becomes larger, therefore one is forced to come up with a different idea. We will overcome this obstacle by looking at (\ref{i1}) in dual coordinates. 

Without loss of generality we can assume that $u_{j}$ and $v_{j}$ are nonnegative for $j=1,\ldots, n$. Moreover, we  can assume that $v_{j}\neq 0$ for all $j$ otherwise the inequality (\ref{i1}) is trivial.

Let us divide (\ref{i1}) by $\prod_{j=1}^{n} v_{j}$. Denoting $x_{j}:=(u_{j}/v_{j})^{p_{n}}$ we see that it is enough to prove the following lemma. 

\begin{lemma}
For any $n\geq 2$ and all  $x_{1}, \ldots, x_{n} \geq  0$ we have 
\begin{align}\label{mine}
\left(\sum_{i=1}^{n} x^{1/p_{n}}_{i}\right)^{p_{n}}  \leq \prod_{i=1}^{n} (1+x_{i}),
\end{align}
where $p_{n} = \frac{\ln \frac{n^{n}}{(n-1)^{n-1}}}{\ln n}$
\end{lemma}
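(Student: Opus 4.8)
The plan is to take logarithms and prove the equivalent form
\[
\sum_{i=1}^n \ln(1+x_i)\ \ge\ p_n \ln\!\left(\sum_{i=1}^n x_i^{1/p_n}\right),
\]
and then to kill the coupling on the right-hand side by dualizing. Setting $a_i=x_i^{1/p_n}$ and invoking the Gibbs variational principle
\[
\ln\!\left(\sum_i a_i\right) = \max_{\lambda\in\Delta_n}\ \sum_i \lambda_i\,\ln\frac{a_i}{\lambda_i},
\]
where $\Delta_n$ is the probability simplex, it suffices to prove, for \emph{every} $\lambda\in\Delta_n$ and every $a\ge 0$, the inequality
\[
\sum_{i=1}^n\Big(\ln(1+a_i^{p_n}) - p_n\,\lambda_i\ln a_i\Big)\ \ge\ -\,p_n\sum_{i=1}^n\lambda_i\ln\lambda_i .
\]
The decisive gain is that the left-hand side now \emph{splits across the coordinates}, so it can be minimized in each $a_i$ separately; this is what I read into the author's phrase ``dual coordinates.''

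The single-variable minimization is routine: minimizing $a\mapsto \ln(1+a^{p_n})-p_n\mu\ln a$ gives the stationary value $a^{p_n}=\mu/(1-\mu)$ and minimal value exactly the binary entropy $H_2(\mu)=-\mu\ln\mu-(1-\mu)\ln(1-\mu)$. Substituting, the whole lemma collapses to a single clean inequality on the simplex,
\[
\sum_{i=1}^n H_2(\lambda_i)\ \ge\ p_n\sum_{i=1}^n(-\lambda_i\ln\lambda_i),\qquad \lambda\in\Delta_n,
\]
equivalently $\sum_i(1-\lambda_i)\ln\frac{1}{1-\lambda_i}\ge (p_n-1)\sum_i\lambda_i\ln\frac{1}{\lambda_i}$. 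A direct check shows equality at the uniform point $\lambda_i=1/n$, and that this equality is \emph{exactly} the relation defining $p_n$; this both explains the shape of $p_n$ and should yield the sharpness half of Theorem~\ref{taot}.

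It remains to prove the simplex inequality, i.e.\ $\sum_i \psi(\lambda_i)\ge 0$ for $\psi(t)=-(1-t)\ln(1-t)+(p_n-1)\,t\ln t$. Since $\psi''(t)=\frac{p_n-1}{t}-\frac{1}{1-t}$ changes sign once, $\psi$ is convex on $[0,1-1/p_n]$ and concave on $[1-1/p_n,1]$; the uniform point lies in the convex part, but $\psi$ is \emph{not} globally convex (indeed $\psi(0)=\psi(1)=\psi(1/n)=0$ and $\psi<0$ on $(0,1/n)$), so a one-shot Jensen or tangent-line argument fails — concretely the tangent estimate breaks as one coordinate approaches $1$. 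I would therefore argue through the minimizer: because $\psi'$ is unimodal, the stationarity condition $\psi'(\lambda_i)=\nu$ has at most two solutions, so a minimizing $\lambda$ takes at most two distinct nonzero values, reducing the problem to a two-parameter family in which one checks that the minimum of $\sum_i\psi(\lambda_i)$ over $\Delta_n$ equals $0$, attained only at the uniform point.

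The hard part will be exactly this last step. The target is convex–concave rather than convex, the inequality is genuinely tight, and the margins near the uniform point (and near a corner $\lambda\approx e_j$) are razor-thin, so no single convexity estimate can finish it. The whole value of the dualization is that it trades the original many-critical-point mess in \eqref{i1} for the transparent entropy inequality above, which makes the two-value reduction tractable; carrying that reduction out cleanly is where I expect the real work to lie.
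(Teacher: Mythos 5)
Your dualization is sound, and it is in essence the same move the paper makes, just in different coordinates: the paper linearizes the terms $\ln(1+x_i^{p})$ via the Legendre-type representation $\ln x = \min_{b}(b+e^{-b}x-1)$ and then minimizes over $x$, while you linearize the single term $\ln\left(\sum_i a_i\right)$ via the Gibbs representation and then minimize over $a$. Your intermediate computations are correct: the coordinate-wise minimum is indeed the binary entropy $H_2(\lambda_i)$, the reduction to $\sum_i \psi(\lambda_i)\geq 0$ on the simplex is valid (and lossless --- it is in fact equivalent to (\ref{mine})), and equality at $\lambda_i=1/n$ is exactly the definition of $p_n$. Your structural observations also check out: the convex--concave shape of $\psi$, the failure of any pointwise or Jensen-type argument, and the two-value reduction at interior critical points via unimodality of $\psi'$; coordinates with $\lambda_i=0$ can be handled by induction on $n$ together with the monotonicity of $p_n$, just as the paper does when it reduces to all $x_i>0$.

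But there is a genuine gap, and it sits exactly where you predict ``the real work'' lies: the proposal never proves the simplex inequality. After the two-value reduction you still must show $k\,\psi(u)+(n-k)\,\psi(v)\geq 0$ whenever $ku+(n-k)v=1$ and $\psi'(u)=\psi'(v)$, for every $1\leq k\leq n-1$ --- a two-parameter family (plus the discrete parameter $k$) with equality at the uniform point and vanishing margins near the corners $e_j$. This is not a routine check: in the paper's proof the exactly analogous step consumes most of the argument --- solving the critical-point system (\ref{crit}) for the two values $u,v$, eliminating one variable through (\ref{expv}) and (\ref{yrb}) to reach the one-variable inequality (\ref{last}), and then establishing two nontrivial monotonicity facts (in $k$, and then in $z$) to close it. Nothing about your coordinates makes this endgame easier; the same tightness that defeats Jensen also defeats soft compactness or perturbation arguments, so an explicit elimination-and-monotonicity computation of the same flavor is still required. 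As it stands, the proposal is a correct and attractive reformulation of the lemma, not a proof of it.
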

\begin{proof}
For $n=2$ the lemma is trivial.    By induction on $n$,   monotonicity of the map 
\begin{align*}
p \to \left(\sum_{i=1}^{n} x^{1/p}_{i}\right)^{p},
\end{align*}
and the fact that $p_{n}$ is decreasing, we can assume that all $x_{i}$ are strictly positive. 
For convenience we set $p:=p_{n}$. 
Introducing new variables we rewrite (\ref{mine}) as follows 
\begin{align*}
p \ln \left( \sum x_{i} \right) \leq \sum \ln (1+x_{i}^{p}).
\end{align*}
 Concavity of the function $\ln (x)$ provides us with a simple representation of the logarithmic function  
\begin{align*}
\ln(x) = \min_{b \in \mathbb{R}} (b+e^{-b}x-1).
\end{align*}
Therefore we are left to show that for all $x_{i} >0$ and all $b_{i} \in \mathbb{R}$ we have 
\begin{align*}
B(x,b):=  \sum (b_{i}+(1+x_{i}^{p})e^{-b_{i}}-1)-p \ln \left( \sum x_{i} \right) \geq 0,
\end{align*}
where $x=(x_{1}, \ldots, x_{n})$ and $b = (b_{1}, \ldots, b_{n})$. Notice that given a vector $b \in \mathbb{R}^{n}$, the  infimum of $B(x,b)$ in $x$ cannot be reached at infinity because of the slow growth of the logarithmic function. Therefore, we look at critical points of  $B$ in $x$ 
\begin{align*}
x^{*}_{k} = \frac{e^{\frac{b_{k}}{p-1}}}{\left(\sum_{i} e^{\frac{b_{i}}{p-1}} \right)^{\frac{1}{p}}} \quad \text{for} \quad k=1, \ldots, n. 
\end{align*}
Notice that $\sum x^{*}_{i} = \left( \sum_{i} e^{\frac{b_{i}}{p-1}} \right)^{\frac{p-1}{p}}$. Therefore 
\begin{align*}
B(x^{*}, b) = \sum_{k} (b_{k} + e^{-b_{k}})+1-n-(p-1)\ln \left(\sum_{k} e^{\frac{b_{k}}{p-1}} \right).
\end{align*}
Setting $r:=p-1>0$, and introducing new variables again we are left to show that 
\begin{align*}
f(y):=1-n+\sum \ln y^{r}_{i} +  \sum \frac{1}{y_{i}^{r}}  -  r \ln \left(\sum y_{i} \right) \geq 0
\end{align*}
for all $y_{i}>0$. It is straightforward to check that $f(y) \geq 0$ on the diagonal, i.e., when $y_{1}=y_{2}=\ldots=y_{n}$.

 In general, we notice that critical points of $f(y)$ satisfy the equation 
\begin{align}\label{crit}
\frac{1}{y_{i}} - \frac{1}{y_{i}^{r+1}} = \frac{1}{y_{j}}- \frac{1}{y_{j}^{r+1}}=\frac{1}{\sum y_{k}}.
\end{align}
Equation (\ref{crit}) gives the identity $\sum y_{i}^{-r}=n-1$, and so at critical points (\ref{crit}) we are only left to show 
\begin{align}\label{lll}
\sum \ln y_{i}   -  \ln \left( \sum y_{i} \right)\geq 0. 
\end{align}
Since the mapping 
\begin{align*}
s \to  \frac{1}{s} - \frac{1}{s^{r+1}}, \quad s>0,
\end{align*}
is increasing on $(0, (1+r)^{1/r})$ and decreasing on the remaining part of the ray, we can assume without loss of generality that $k$ numbers of $x_{i}$ equal to $u\geq (1+r)^{1/r}$, and the remaining $n-k$ numbers of $x_{i}$ equal to $v\leq (1+r)^{1/r}$. Moreover, we can assume that $0<k<n$ otherwise the statement is already proved. From (\ref{crit}), we have 
\begin{align}\label{eq1}
\frac{1}{u}-\frac{1}{u^{r+1}} = \frac{1}{v} - \frac{1}{v^{r+1}} = \frac{1}{k u + (n-k)v}.
\end{align}
From the equality of the first and the third expressions in (\ref{eq1}) it follows that 
\begin{align}\label{expv}
v = \frac{u^{r+1}(1-k)+ku}{(u^{r}-1)(n-k)}.
\end{align}
In order $v$ to be positive we assume that the numerator of (\ref{expv}) is non negative. If we plug the expression for $v$ from (\ref{expv}) into the first equality of (\ref{eq1}) then after some simplifications we obtain the following equation in the variable    $z:=u^{r} \geq 1+r$ 
\begin{align}\label{yrb}
\frac{(z-1)^{r} (n-k)^{r+1}}{(z(1-k)+k)^{r}} = (n-1)z-k.
\end{align}
It follows from (\ref{eq1}) that $(ku+(n-k)v)^{r} = \left(\frac{z}{z-1}\right)^{r} z$, and so using (\ref{yrb}) we obtain 
\begin{align*}
v^{r} = \frac{z (n-k)}{(n-1)z-k}.
\end{align*}
 Therefore at critical points (\ref{lll}) simplifies as follows 
\begin{align}\label{last}
(n-1) \ln z + (n-k) \ln \frac{n-k}{(n-1)z-k} - r \ln \frac{z}{z-1} \geq 0.
\end{align}
Now it is pretty straightforward to show that (\ref{last}) is non negative even under the assumption $z \geq 1+r$ for $r = p-1 = \frac{(n-1) \ln \frac{n}{n-1}}{\ln n}$. Indeed, notice that $z > \frac{n}{n-1}$, and  the map 
\begin{align*}
k \to (n-k)\ln \frac{n-k}{(n-1)z-k} 
\end{align*}
is increasing on $[1,n-1]$. Therefore it is enough to check nonnegativity of (\ref{last}) when $k=1$, in which   case the inequality follows again using $z > \frac{n}{n-1}$, and the fact that the map 
\begin{align*}
z \to \frac{\ln \left(1+\frac{1}{z(n-1)-1} \right)}{\ln \left(1+\frac{1}{z-1} \right)}
\end{align*}
is increasing for $z \geq \frac{n}{n-1}$. 
\end{proof}
\begin{remark}
Choice $x_{1}=\ldots=x_{n}=\frac{1}{n-1}$ gives equality in \textup{(\ref{mine})}, and this confirms the fact that $p_{n}$ is the best possible in Theorem~\textup{\ref{taot}}. 
\end{remark}

\subsection{Inductive step}
Inductive step is the same as in~\cite{KT} without any modifications. This is a standard argument for obtaining estimates on the Hamming cube (see for example~\cite{IV}). In order to make the paper self contained we decided to repeat the argument. 

Suppose (\ref{maininequality}) is true on the Hamming cube of dimension $m$. Without loss of generality assume $f_{j} \geq 0$, and set $g_{j} :=f_{j}^{p}$ for all $j$. Define 
\begin{align*}
B_{n}(y_{1}, \ldots, y_{n}) := y_{1}^{1/p_{n}}\cdots y_{n}^{1/p_{n}}.
\end{align*}
For $x_{j} \in \{0,1\}^{m+1}$, let $x_{j} = (\bar{x}_{j}, x'_{j})$ where $\bar{x}_{j}$ is the vector consisting of the first $m$ coordinates of $x_{j}$, and number  $x'_{j}$ denotes the last $m+1$ coordinate of $x_{j}$. Set 
\begin{align*}
\tilde{g}_{j}(x'_{j}):=\sum_{\bar{x}_{j} \in \{0,1\}^{m}} g_{j}(\bar{x}_{j}, x'_{j}) \quad j=1,\ldots, n. 
\end{align*}
We have 
\begin{align*}
&\sum_{x_{j} \in \{0,1\}^{m+1}\; :\; x_{1}+\cdots+x_{n}=1^{m+1}.}B(g_{1}(x_{1}),\ldots, g_{n}(x_{n}))=\\
&\sum_{x'_{j}\in\{0,1\}\; :\; x'_{1}+\cdots+x'_{n}=1.}\; \sum_{\bar{x}_{j} \in \{0,1\}^{m}\; :\; \bar{x}_{1}+\cdots+\bar{x}_{n}=1^{m}.}B(g_{1}(x_{1}),\ldots, g_{n}(x_{n}))\stackrel{\mathrm{induction}}{\leq}\\
& \sum_{x'_{j}\in\{0,1\}\; :\; x'_{1}+\cdots+x'_{n}=1.} B(\tilde{g}_{1}(x'_{1}), \ldots, \tilde{g}_{n}(x'_{n}))\stackrel{\mathrm{basis}}{\leq} \\
&B\left( \sum_{x_{1} \in \{0,1\}^{m+1}} g_{1}(x_{1}), \ldots, \sum_{x_{n} \in \{0,1\}^{m+1}} g_{n}(x_{n})\right).
\end{align*}
\subsection{The proof of Corollary~\ref{taoc}}\label{kak}
Without loss of generality we may assume that all the sets $A$ in $X$ are subsets of $\{1,\ldots, m\}$ with some natural $m \geq 1$ (see~\cite{KT}). For $j=1,\ldots, n$ define functions 
$$
f_{j} : \{0,1\}^{m} \to \{0,1\}
$$
 as follows:  
\begin{align*}
f_{1}(a_{1}, \ldots, a_{m})=\ldots=f_{n-1}(a_{1}, \ldots, a_{m})=1
\end{align*}
if the set $\{1\leq i \leq m\; : \; a_{i}=1\}$ lies in $X$, and $f_{j}=0$ otherwise. Finally we define 
\begin{align*}
f_{n}(a_{1}, \ldots, a_{m})=1
\end{align*}
if the set $\{1\leq i \leq m\; : \; a_{i}=0\}$ lies in $X$, and $f_{n}=0$ otherwise. 
Notice that in this case inequality  (\ref{maininequality}) becomes  (\ref{bolof}). 
\subsection*{Acknowledgements}
I would like to thank an anonymous referee, and Benjamin Jaye for  helpful comments.


\end{document}